\documentclass[12pt]{amsart}

\usepackage[utf8]{inputenc} 
\usepackage[T1]{fontenc}
\usepackage{lmodern}
\usepackage{comment}
\usepackage{amsmath}
\usepackage{amssymb}

\newtheorem{theorem}{Theorem}
\newtheorem{corollary}[theorem]{Corollary}
\newtheorem{lemma}[theorem]{Lemma}

\newcommand{\C}{\mathbb C}
\newcommand{\R}{\mathbb R}
\newcommand{\Z}{\mathbb Z}
\newcommand{\N}{\mathbb N}

\begin{document}

\title[Approximation by entire functions]{Approximation of a function and its derivatives by entire functions}

\author{P. M.~Gauthier and J.~Kienzle}

\address{D\'epartement de math\'ematiques et de statistique, Universit\'e de Montr\'eal,
CP-6128 Centreville, Montr\'eal,  H3C3J7, CANADA}
\email{gauthier@dms.umontreal.ca, julie.kinzlie@umontreal.ca}

\keywords{Carleman theorem} \subjclass[2010]{Primary: 30E10}

\thanks{Research of second author supported by NSERC (Canada)}

\begin{abstract}
A simple proof is given for the fact that, for $m$ a non-negative integer, a function $f\in C^{(m)}(\R),$ and an arbitrary positive continuous function $\epsilon,$ there is an entire function $g,$ such that $|g^{(i)}(x)-f^{(i)}(x)|<\epsilon(x),$ for all $x\in\R$ and for each $i=0,1\cdots,m.$ We also consider the situation, where $\R$ is replaced by an open interval. 
\end{abstract}

\maketitle

\section{Introduction}

For an open interval  $I = (a, b), -\infty \leq a  <  b \leq +\infty,$ 
and $m=0,1,\cdots,$  denote by $C^{(m)}(I)$ the space of functions  $f:I\rightarrow\C,$ whose derivitaves $f^{(0)},f^{(1)},\cdots,f^{(m)}$ exist and are continuous on $I.$ For a closed interval $\overline I,$ let $C^{(m)}(\overline I)$ be the space of functions $f\in C^{(m)}(I),$ such that $f^{(0)},f^{(1)},\cdots,f^{(m)}$ extend continuously to $\overline I.$ By abuse of notation, we continue to denote these extensions by $f^{(j)}$ respectively.  The following generlalization of the Weierstrass approximation theorem is well-known. 

\begin{theorem}
For $-\infty<a<b<+\infty,$ and $m$ a non-negative integer, let $f\in C^{(m)}([a,b])$ and $\epsilon>0.$ Then, there is a polynomial $p,$ such that $|p^{(i)}(x)-f^{(i)}(x)|<\epsilon,$ for all $x\in [a,b]$ and $i=0,1,\cdots,m.$  
\end{theorem}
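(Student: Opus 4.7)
The plan is to reduce the statement to the classical ($m=0$) Weierstrass theorem applied to the top derivative $f^{(m)}$, and then recover approximations of the lower derivatives by iterated integration.

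First I would fix $\delta>0$ (to be chosen at the end in terms of $\epsilon$ and $b-a$) and invoke the classical Weierstrass theorem on the continuous function $f^{(m)}\in C([a,b])$ to obtain a polynomial $q$ with $|q(x)-f^{(m)}(x)|<\delta$ for all $x\in[a,b]$. Then I would define $p$ by specifying its top derivative and matching initial values at $a$: set $p^{(m)}:=q$, and inductively
\[
p^{(i)}(x) := f^{(i)}(a) + \int_a^x p^{(i+1)}(t)\,dt, \qquad i=m-1,m-2,\ldots,0.
\]
Since $q$ is a polynomial and each integration of a polynomial produces a polynomial (and adding a constant keeps it polynomial), $p$ is itself a polynomial.

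Next I would estimate the errors by downward induction on $i$. By construction $f^{(i)}(a)=p^{(i)}(a)$ for $i<m$, so the fundamental theorem of calculus gives
\[
f^{(i)}(x)-p^{(i)}(x)=\int_a^x\bigl(f^{(i+1)}(t)-p^{(i+1)}(t)\bigr)\,dt.
\]
Starting from $|f^{(m)}-p^{(m)}|<\delta$ on $[a,b]$, a straightforward induction yields
\[
|f^{(i)}(x)-p^{(i)}(x)|\le \delta\,(b-a)^{m-i},\qquad x\in[a,b],\ i=0,1,\ldots,m.
\]

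Finally I would choose $\delta$ so small that $\delta\max_{0\le i\le m}(b-a)^{m-i}<\epsilon$; this forces $|p^{(i)}(x)-f^{(i)}(x)|<\epsilon$ simultaneously for all $i$ and all $x\in[a,b]$. There is no real obstacle here beyond being careful that the constants of integration are chosen at the single common point $a$, which is what enables the clean inductive estimate; the rest is bookkeeping on top of the known $m=0$ theorem.
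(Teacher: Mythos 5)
Your proof is correct and follows exactly the route the paper indicates for Theorem 1: approximate $f^{(m)}$ by a polynomial via the classical Weierstrass theorem and integrate $m$ times, fixing the constants of integration at $a$ so that the error estimates propagate downward. The paper leaves the details as a one-line remark; your write-up simply supplies the bookkeeping.
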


To prove this theorem, we merely approximate $f^{(m)}$ by a polynomial and integrate $m$ times. 

An other  extension of the Weierstrass theorem, not as well known as it should be,  is the following  theorem of Carleman \cite{C}, in which a bounded interval is replaced by the entire real line.  
Denote by  $C^+(X)$ the {\em positive} continuous functions on a set $X.$ 

\begin{theorem} (Carleman) Let $f \in C(\mathbb{R})$ and $\epsilon\in C^+(\mathbb{R})$. Then, there exixts an entire function  $g$ such that $\left| f(x) - g(x) \right| < \epsilon(x), x \in \mathbb{R}$.  
\end{theorem}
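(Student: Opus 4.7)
The plan is to build $g$ as the uniform limit (on compact subsets of $\C$) of a sequence of polynomials $p_n=\sum_{k\le n}q_k$ in which each correction $q_n$ is designed to approximate $f-p_{n-1}$ on a new annular piece of $\R$ while remaining negligibly small on a complex disk containing every previously handled region. I would begin by fixing $D_n:=\overline{B(0,n)}\subset\C$ and the real annular segments $A_n:=\{x\in\R:n-1\le|x|\le n\}$, together with positive constants $\delta_n$ chosen so that $\sum_{n\ge N}\delta_n<\tfrac12\inf_{x\in A_N}\epsilon(x)$ for every $N\ge 1$; this is possible because $\epsilon$ is continuous and strictly positive on each compact $A_N$.

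At the inductive step, with $p_{n-1}$ in hand, I would apply Runge's (or Mergelyan's) theorem to the compact set $K_n:=D_{n-1}\cup A_n^+$, where $A_n^+:=[-n,-(n-1)-\eta_n]\cup[(n-1)+\eta_n,n]$ is $A_n$ trimmed inward by a small $\eta_n>0$, so that $K_n$ is a disjoint union of a closed disk and two real segments lying outside it; its complement in $\C$ is connected. The theorem supplies a polynomial $q_n$ with $|q_n|<\delta_n$ on $D_{n-1}$ and $|q_n-(f-p_{n-1})|<\delta_n$ on $A_n^+$. A short continuity argument --- choosing $\eta_n$ tiny enough in view of the already-fixed polynomial $p_{n-1}$ and the local uniform continuity of $f$ --- then handles the narrow transition zones $[-(n-1)-\eta_n,-(n-1)]\cup[(n-1),(n-1)+\eta_n]$ between $D_{n-1}$ and $A_n^+$, keeping the error there bounded by a fixed multiple of $\delta_n$.

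Finally, $g:=\lim_n p_n$ converges uniformly on every $D_N$ because $\|q_n\|_{D_{n-1}}<\delta_n$ for $n>N$ is a summable bound, so $g$ is entire; and telescoping the corrections, combined with the choice of the $\delta_n$, yields $|g(x)-f(x)|<\epsilon(x)$ for every $x\in\R$. The main difficulty I expect is the simultaneous-approximation step itself --- producing a single polynomial that is uniformly tiny throughout a complex disk and yet performs the intended correction on the new real annulus just outside it --- which is precisely what Runge/Mergelyan delivers thanks to the chosen geometry of $K_n$. The remaining work is the careful bookkeeping of the $\delta_n$ and $\eta_n$ so that the prescribed error function $\epsilon(x)$ is respected at every point of $\R$.
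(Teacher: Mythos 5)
Your overall architecture --- exhausting $\R$ by the segments $n-1\le|x|\le n$, applying Runge/Mergelyan to a disk-plus-segments compact set with connected complement, and summing a telescoping series of corrections that are uniformly small on $\overline{D_{n-1}}$ --- is exactly the strategy of the paper's Lemma 7, which is its strengthened form of Carleman's theorem. But there is a genuine gap at the transition zones. You trim the segments to $A_n^+$ so that $K_n=D_{n-1}\cup A_n^+$ is a disjoint union, and you obtain control of $q_n$ only on $K_n$. On the excluded gaps $[(n-1),(n-1)+\eta_n]$ and $[-(n-1)-\eta_n,-(n-1)]$ the polynomial $q_n$ is completely unconstrained: Runge's theorem says nothing there, and a polynomial that is small on a disk and prescribed on two nearby segments can be arbitrarily large in between. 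Your proposed fix --- choosing $\eta_n$ tiny in view of the already-fixed $p_{n-1}$ and the continuity of $f$ --- cannot work, because $\eta_n$ must be fixed \emph{before} $q_n$ is produced (it defines $K_n$), so shrinking $\eta_n$ gives you no leverage over the behaviour of $q_n$ on the gap. Worse, the gap lies inside $D_n$, so every later correction $q_{n+1},q_{n+2},\dots$ is bounded there by $\delta_{n+1},\delta_{n+2},\dots$ and can never repair a large error: the limit $g$ may genuinely fail to approximate $f$ on these zones.

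The repair is to make the compact set connected at the junctions and to arrange that the function handed to Mergelyan is continuous across them. The paper does this with simultaneous approximation and interpolation (Deutsch's generalization of the Walsh lemma, Lemmas 5--6): each approximant $g_n$ is forced to satisfy $g_n(\pm n)=f(\pm n)$ exactly, so the hybrid function equal to $g_n$ on $\overline{D_n}$ and to $f$ on $[-(n+1),n+1]\setminus[-n,n]$ lies in $A(E_{n+1})$ for the \emph{connected} set $E_{n+1}=\overline{D_n}\cup[-(n+1),-n]\cup[n,n+1]$, and Mergelyan applies with no uncontrolled region. Alternatively you could keep untrimmed segments, include the junctions in $K_n$, and replace $f-p_{n-1}$ near each junction by a continuous bridge (a convex combination of $0$ and $f-p_{n-1}$), whose size is controlled by the inductive bound on $|f-p_{n-1}|$ there. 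Either way, some device beyond plain Runge on a disconnected set is needed; as written, your induction does not close.
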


Note that $f$ can be approximated much better than uniformly, since $\epsilon(x)$ may decrease to zero with arbitrary speed, as $x\rightarrow\infty.$ 
Of course, since every continuous function on a bounded closed interval extends continuously to $\R$ and since entire functions are represented by their Maclaurin series, the Weierstrass theorem is contained in the Carleman theorem. There are many proofs of the Weierstrass theorem in various textbooks on approximation, but the original proof of Weierstrass actually used a preliminary version of the Carleman theorem. That is, Weierstrass began by approximating a continuous function on a bounded closed interval by entire functions and then approximating the entire function by partial sums of its Maclaurin series. 

Let $I = (a, b)$ be an interval, $-\infty \leq a \leq b \leq +\infty$. Denote by $I^c = \mathbb{R}\backslash I$ the complement of $I$ in $\R$. 
For an open subset $U \subset\mathbb{C}$, we denote by $H(U)$ the family of functions holomorphic on $U$.

\begin{theorem} Let $m$ be a non-negative integer, $I = (a, b)$, $f \in C^m(I),$ and $\epsilon \in C^+(I).$ Then, there exists a function  $g\in H(\mathbb{C} \backslash I^c),$ such that $|f^{(i)}(x) - g^{(i)}(x)| < \epsilon(x), x \in I, i=0,1,\cdots,m.$
\end{theorem}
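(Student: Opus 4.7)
The plan is to reduce the interval version to the $\R$-version of the theorem stated in the abstract (i.e., Carleman with $m$ derivatives on $\R$) by a smooth change of variables on $\R$ that extends to a conformal change of variables on $\C\setminus I^c$.

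First, I would exhibit a $C^\infty$ diffeomorphism $\phi:\R\to I$ whose inverse $\phi^{-1}:I\to\R$ extends to a holomorphic function on $\C\setminus I^c$. For $I=(a,b)$ bounded, take $\phi(t)=\frac{a+b}{2}+\frac{b-a}{2}\tanh t$, so that $\phi^{-1}(x)=\frac{1}{2}\log\frac{x-a}{b-x}$; the M\"obius transformation $(z-a)/(b-z)$ is negative on $I^c$ and positive on $I$, so a branch of $\log$ cut along $(-\infty,0]$ makes $\phi^{-1}$ holomorphic on $\C\setminus I^c$. For $I=(a,+\infty)$ take $\phi(t)=a+e^t$ with $\phi^{-1}(z)=\log(z-a)$ cut along $(-\infty,a]$. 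The remaining cases ($I=(-\infty,b)$ and $I=\R$) are analogous or trivial.

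Second, I would transfer the problem to $\R$. Since $\phi\in C^\infty(\R)$ with $\phi(\R)=I$, the pullback $F=f\circ\phi$ lies in $C^m(\R)$, its derivatives being given by the Fa\`a di Bruno formula as universal polynomials in $f^{(k)}(\phi(t))$ (for $k\le m$) and the derivatives of $\phi$. Fix an auxiliary $\delta\in C^+(\R)$ to be chosen below. By the $\R$-version of the theorem applied to $F$, there is an entire function $G$ with $|F^{(i)}(t)-G^{(i)}(t)|<\delta(t)$ for all $t\in\R$ and $i=0,\ldots,m$. Set $g=G\circ\phi^{-1}$, which lies in $H(\C\setminus I^c)$.

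Third, I would relate the errors. Writing $f=F\circ\phi^{-1}$ and $g=G\circ\phi^{-1}$, Fa\`a di Bruno now expresses $f^{(i)}(x)$ and $g^{(i)}(x)$ at $x=\phi(t)\in I$ as the \emph{same} polynomial in the derivatives $(\phi^{-1})^{(\ell)}(x)$, applied respectively to $F^{(k)}(t)$ and $G^{(k)}(t)$ for $k\le i$. Hence $f^{(i)}(x)-g^{(i)}(x)$ is a linear combination of the errors $F^{(k)}(t)-G^{(k)}(t)$, $k\le i$, with coefficients that depend only on $x$ through derivatives of $\phi^{-1}$ and are therefore continuous on $I$. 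Choosing $\delta(t)=\epsilon(\phi(t))/M(\phi(t))$ for a sufficiently large continuous majorant $M$ on $I$ of the moduli of these coefficients (summed over $i,k\le m$) yields $|f^{(i)}(x)-g^{(i)}(x)|<\epsilon(x)$ for all $x\in I$ and $i=0,\ldots,m$.

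The main obstacle is purely notational: unpacking Fa\`a di Bruno and verifying that the coefficients of $F^{(k)}-G^{(k)}$ in the resulting expression of $f^{(i)}-g^{(i)}$ are the same for $f$ and $g$ (so they do not depend on $f$) and are bounded on $I$ by a continuous function. Once this bookkeeping is done, the result follows at once from the $\R$-version of the theorem.
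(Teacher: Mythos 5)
Your proposal is correct, but it takes a genuinely different route from the paper. The paper proves the special case $I=\R$ (Corollary 4) directly --- approximating $f^{(m)}$ by an entire function that simultaneously interpolates the iterated integrals $T_i^j$ over the unit intervals $[j-1,j]$ (Lemma 7, built on Mergelyan's theorem and the Walsh--Deutsch interpolation lemma) and then integrating $m$ times --- and dismisses the general interval case with the sentence ``the proof of the general theorem is an obvious modification.'' You instead take the $\R$-case as given and deduce the general case from it by an explicit conformal change of variables: the M\"obius map $z\mapsto (z-a)/(b-z)$ does send $I^c\cup\{\infty\}$ onto $(-\infty,0]\cup\{\infty\}$, so $\phi^{-1}$ extends holomorphically to $\C\setminus I^c$ as you claim; Fa\`a di Bruno, being \emph{linear} in the derivatives of the outer function, converts the errors $F^{(k)}-G^{(k)}$ into $f^{(i)}-g^{(i)}$ with coefficients (Bell polynomials in the derivatives of $\phi^{-1}$) that are continuous on $I$ and independent of $f$; and taking the majorant $M\geq 1+\sum_{i,k}|B_{i,k}|$ preserves the strict inequality. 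So the bookkeeping you defer does go through. What your route buys is a clean, rigorous substitute for the paper's unproved ``obvious modification,'' and it explains why the domain $\C\setminus I^c$ appears: it is the conformal preimage of the domain of $\log$. What it costs is self-containedness: your argument presupposes the $\R$-version (Hoischen's theorem, which is precisely what the paper's Section 3 actually proves), so it complements the paper's proof rather than replacing it --- combined with the paper's proof of Corollary 4, it yields a complete proof of Theorem 3.
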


In particular, we have the following generalization of Theorem 1, due to Hoischen \cite[Satz 3]{H}. 

\begin{corollary} Let $m$ be a non-negative integer,  $f \in C^m(\mathbb{R}),$ and $\epsilon \in C^+(\mathbb{R}).$ Then, there exists an entire function $ g,$ such that $|f^{(i)}(x) - g^{(i)}(x)| < \epsilon(x), x \in \mathbb{R}, i=0,1\cdots, m.$
\end{corollary}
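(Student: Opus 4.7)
The plan is to derive the corollary as an immediate specialization of Theorem 3. Setting $I = (a, b) = (-\infty, +\infty)$ gives $I^c = \emptyset$, whence $\mathbb{C}\backslash I^c = \mathbb{C}$, so ``$g \in H(\mathbb{C}\backslash I^c)$'' asserts precisely that $g$ is entire. The inequality $|f^{(i)}(x) - g^{(i)}(x)| < \epsilon(x)$ for $x \in I = \R$ and $i = 0, 1, \ldots, m$ transfers verbatim from the conclusion of Theorem 3 to that of the corollary. No additional work is required once Theorem 3 is in hand.

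For perspective, one could also prove the corollary directly from Carleman's theorem by mimicking the integration argument that deduces Theorem 1 from the classical Weierstrass theorem. Apply Theorem 2 to $f^{(m)}$ with an auxiliary tolerance $\delta \in C^+(\R)$ to obtain an entire $h$ satisfying $|f^{(m)}(x) - h(x)| < \delta(x)$. Then construct $g$ as an iterated antiderivative: set $g_m := h$ and
\[
g_{j-1}(x) := f^{(j-1)}(0) + \int_0^x g_j(t)\, dt \quad\text{for } j = m, m-1, \ldots, 1,
\]
and put $g := g_0$. Each $g_j$ is entire, $g^{(j)} = g_j$, and a routine induction bounds $|f^{(j)}(x) - g^{(j)}(x)|$ by the $(m-j)$-fold iterated integral of $\delta$ on the segment between $0$ and $x$.

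The main obstacle in this direct route is calibrating $\delta$. Because $\epsilon$ may decay with arbitrary speed as $|x| \to \infty$, one cannot simply put $\delta = \epsilon/C$; successive integrations can destroy any such bound. The standard remedy is first to replace $\epsilon$ by a positive continuous minorant $\tilde\epsilon \le \epsilon$ which is nonincreasing in $|x|$, and then to choose $\delta$ decaying so fast at infinity that $(1 + |x|)^m \delta(x)$ is integrable with total mass small compared to $\tilde\epsilon$. Cauchy's formula for iterated integration then simultaneously controls all $m+1$ iterated integrals of order at most $m$. The short deduction via Theorem 3 bypasses this calibration altogether, which is why presenting Corollary~1 as a corollary of Theorem~3 is the natural route.
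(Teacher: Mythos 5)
Your one-line deduction from Theorem 3 is formally valid: with $I=\R$ one has $I^c=\emptyset$, so $\C\setminus I^c=\C$ and the conclusion of Theorem 3 becomes verbatim the statement of the corollary. This is exactly how the paper positions the result. Be aware, however, that the paper's logic runs in the opposite direction: Section 3 proves the corollary \emph{directly} and then declares the general Theorem 3 to be ``an obvious modification,'' so quoting Theorem 3 defers all of the real work rather than doing any of it.

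The genuine gap is in your sketched direct route, and it is worth pinpointing, because the failure is precisely what forces the paper's machinery. If you only arrange $|f^{(m)}-h|<\delta$ with $\int_\R (1+|t|)^m\,\delta(t)\,dt$ small, then after one integration you get
\[
\bigl|f^{(m-1)}(x)-g^{(m-1)}(x)\bigr| \;\le\; \left|\int_0^x \bigl(f^{(m)}-h\bigr)\,dt\right| \;\le\; \int_{\R}\delta,
\]
which is a small \emph{constant}. Since $\epsilon(x)$ may tend to $0$ as $|x|\to\infty$, no constant bound, however small, yields $|f^{(m-1)}(x)-g^{(m-1)}(x)|<\epsilon(x)$ for all $x$: you would need $\int_0^x(f^{(m)}-h)\,dt\rightarrow 0$, that is, actual cancellation, which no decay condition on $\delta$ can provide. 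The paper's remedy is not calibration of $\delta$ but \emph{interpolation of moments}. Its Lemma 7, built on the Walsh--Deutsch simultaneous approximation-and-interpolation lemma and a Mergelyan-type pole-pushing induction over the sets $E_k=\overline{D_{k-1}}\cup[-k,-(k-1)]\cup[k-1,k]$, produces an entire $g_m$ with $|f^{(m)}-g_m|<\epsilon_m$ \emph{and} $T_i^j(g_m)=T_i^j(f^{(m)})$ for all $i=1,\dots,m$ and all $j\in\Z$; in particular $\int_{j-1}^j(f^{(m)}-g_m)\,dt=0$ for every integer $j$. Consequently $\int_0^x(f^{(m)}-g_m)\,dt$ telescopes down to the single term $\int_{[x]}^x$, an integral over an interval of length at most $1$, which is bounded by $\epsilon_m([x])=\epsilon_{m-1}([x]+1)<\epsilon_{m-1}(x)$; the remaining $T_i^j$ conditions let the same argument repeat for each successive antiderivative. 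Without this interpolation step the integration argument cannot be closed, so your direct route as written does not prove the corollary.
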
 

The results of Carleman and  Hoischen have been extended in various directions. For example, Carleman's theorem was extended by Scheinberg to approximation by entire functions of several complex variables and Frih and Gauthier \cite[Corollary]{FG} showed the corresponding extension of the theorem of Hoischen on the simultaneous approximation of derivatives. In \cite{S} and \cite{FG} the functions to be approximated are defined on  the real part $\R^N$ of $\C^N=\R^N+i\R^N$ and they are approximated by functions holomorphic in all of $\C^N.$  Very recently, Johanis \cite{J} has considered the more general problem of approximating a function $f$ given on only a portion $\Omega$ of $\R^N.$ Whitney's famous theorem \cite{W} allows one to approximate such functions $f$ by functions analytic on $\Omega.$  Of course every  function  analytic on $\Omega$ naturally extends holomorphically to a neighborhood of $\Omega$ in $\C^N,$ but this neighborhood will depend on the analytic function.  
The beautiful result of Johannis shows that there is a domain $\widetilde\Omega\subset\C^N,$  depending only on $\Omega$ and not on $f,$ such that $f$ can be approximated by functions holomorphic on $\widetilde\Omega.$ When applied to our situation, where $N=1$ and $\Omega$ is an interval $I,$ the domain $\widetilde I$ is smaller than the domain $\C\setminus I^c$ which we obtain in Theorem 3.

For a closed set $E\subset\C,$ let $A(E) \equiv C(E) \cap H(E^o)$. 
In the Carleman theorem, if we replace the real line $\R$ by a a closed subset $E\subset\C,$ then the function $f$ to be approximated must be, not only continuous on $E,$ but also holomorphic on the interior of $E.$ That is, $f$ must lie in $A(E).$
A condition on sets $E,$ necessary for the possibility of such approximations, was introduced in \cite{Gau}, and in \cite{N} this condition was shown to be also sufficient.  

The techniques employed in previous papers are quite technical. The aim of the present note is to show that Theorem 3, extending Theorem 1 to open intervals and $\epsilon$ decreasing to zero with arbitrary speed, can be proved in the same way as the elementary proof of Theorem 1, that is, by approximating the derivative of highest order and integrating. 

%%%%%%%%%%%%%%%%%%%%%%%%%%%%%%%%%%%%%%%%%%%%%%%%%%%%%

\section{Preliminaries}

A fundamental lemma, known as the Walsh Lemma, asserts that, for a compact set $K\subset\C,$ if every function $f\in A(K)$ can be uniformly approximated by rational functions having no poles on $K,$ then, not only are there rational functions which uniformly approximate $f,$ there are even rational functions which, in addition to approximating $f,$ also simultaneously interpolate $f$ at finitely many given points of $K.$ This Walsh Lemma has been extended to the context of functional analysis. 
For a topological vector space $X$, we denote by $X^*$ the (continuous) dual. If $X$ and $Y$ are topological vector spaces, where $X$ is a subspace of $Y$, then of course $Y^* \subset X^*$. 
The following result on simultaneous approximation and interpolation is a generalization of  the Walsh  Lemma due to Deutsch \cite{D}. 

\begin{lemma} Let $X$ be a dense subspace of a normed vector space $Y$. Let $y \in Y$, $\epsilon$ be a positive number and $ L_1, \cdots , L_n \in Y^*.$ Then, there exists $x \in X$ such that $\left| y - x \right| < \epsilon$ and $ L_i(y) = L_i(x), \, i = 1, \cdots , n.$
\end{lemma}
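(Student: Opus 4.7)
The plan is to reduce to a finite-dimensional correction problem. First I would assume, without loss of generality, that $L_1,\ldots,L_n$ are linearly independent in $Y^*$, by passing to a maximal linearly independent subfamily (any interpolation condition cut by a redundant functional is automatic once the others are satisfied).

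Next, I would verify that the restrictions $L_1|_X,\ldots,L_n|_X$ remain linearly independent. If some nontrivial combination $\sum c_i L_i$ vanished on $X$, then by continuity and density of $X$ in $Y$ it would vanish on all of $Y$, contradicting independence in $Y^*$. Consequently the linear map $T\colon X \to \mathbb{F}^n$ defined by $T(x) = (L_1(x),\ldots,L_n(x))$ is surjective, so we can fix once and for all a ``dual basis'' $e_1,\ldots,e_n \in X$ with $L_i(e_j) = \delta_{ij}$. Let $M = \max_j \|e_j\|$ and $C = \max_i \|L_i\|_{Y^*}$.

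The construction of $x$ is then the following: by density of $X$ in $Y$, pick $x_0 \in X$ with $\|y-x_0\| < \delta$, where $\delta>0$ is to be chosen. Put $\alpha_i = L_i(y-x_0)$, so that $|\alpha_i| \le C\delta$, and set
\[
x \;=\; x_0 + \sum_{i=1}^{n} \alpha_i\, e_i \;\in\; X.
\]
A direct computation gives $L_j(x) = L_j(x_0) + \alpha_j = L_j(y)$ for every $j$, so the interpolation conditions hold exactly. Moreover
\[
\|x-y\| \;\le\; \|x_0-y\| + \sum_{i=1}^n |\alpha_i|\,\|e_i\| \;\le\; \delta(1 + nCM),
\]
which is less than $\epsilon$ provided $\delta$ was chosen small enough.

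The only subtle point is step two, where density of $X$ is used to transfer linear independence from $Y^*$ to the restrictions on $X$, and hence to guarantee the existence of the dual basis $\{e_j\}$ inside $X$. Everything after that is a one-line algebraic correction together with the triangle inequality, so no real obstacle remains.
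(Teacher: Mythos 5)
Your argument is correct and complete. Note, however, that the paper offers no proof of this lemma at all: it is stated as a quoted result of Deutsch \cite{D} (a generalization of the Walsh lemma on simultaneous approximation and interpolation), so there is no internal proof to compare yours against. What you have written is the standard ``finite-dimensional correction'' proof of Deutsch's lemma, and each step checks out: the reduction to linearly independent functionals is harmless; density of $X$ plus continuity of the $L_i$ does transfer linear independence to the restrictions $L_i|_X$; linear independence of $n$ functionals on a vector space forces surjectivity of $x\mapsto (L_1(x),\dots,L_n(x))$ (otherwise the image lies in a hyperplane, yielding a vanishing nontrivial combination), which gives the dual system $e_j\in X$ with $L_i(e_j)=\delta_{ij}$; and the correction $x=x_0+\sum_i L_i(y-x_0)e_i$ achieves exact interpolation while the error grows only by the fixed factor $1+nCM$, absorbed by the choice of $\delta$. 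The one thing worth flagging is that your proof is specific to the normed setting (you use $\|L_i\|_{Y^*}$ and $\|e_j\|$), whereas Deutsch's result is formulated for topological vector spaces; since the lemma as stated in the paper assumes $Y$ is normed, this costs nothing here.
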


A compact set $K\subset\C$ is said to be a set of polynomial approximation, if for each $f\in A(K)$ and $\epsilon>0,$  there exists a polynomial $p$  such that $|f - p| < \epsilon$. The celebrated theorem of Mergelyan (see \cite{Gai}) states that  a compact set is a set of polynomial approximation if and only if its complement is connected.  A particular case of the Walsh lemma is the following. 

\begin{lemma} Let $K \subset \mathbb{C}$ be a compact set of polynomial approximation. Then, for all $\phi \in A(K)\, , L_i \in A(K)^*\, , i = 1, \cdots ,n$ and for all $\epsilon> 0,$ there exists a polynomial $p,$   such that $|\phi - p| < \epsilon$ and $ L_i(\phi) = L_i(\psi), \, i = 1, \cdots , n.$
\end{lemma}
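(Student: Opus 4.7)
The plan is to derive Lemma 6 as an immediate specialization of Deutsch's Lemma (Lemma 5). The correspondence is dictated by the statement: let $Y = A(K)$ equipped with the supremum norm, which is a Banach space (in fact a closed subspace of $C(K)$, since a uniform limit of functions holomorphic on $K^{\circ}$ is holomorphic on $K^{\circ}$), and let $X \subset Y$ be the subspace consisting of (restrictions to $K$ of) polynomials in one complex variable.

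First I would check the density hypothesis of Lemma 5. By assumption $K$ is a set of polynomial approximation, which by definition means that for every $\phi \in A(K)$ and every $\epsilon > 0$ there is a polynomial $p$ with $\|\phi - p\|_{K} < \epsilon$. This is precisely the statement that $X$ is dense in $Y = A(K)$ for the supremum norm. Second, I would note that the functionals $L_1,\ldots,L_n \in A(K)^{\ast}$ are continuous linear functionals on $Y$, so they qualify as elements of $Y^{\ast}$ in the sense required by Lemma 5.

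With these identifications in place, the proof is one sentence: apply Lemma 5 to the data $Y = A(K)$, $X = \{\text{polynomials restricted to }K\}$, $y = \phi$, the given $\epsilon > 0$, and the functionals $L_1,\ldots,L_n$. Lemma 5 yields some $x \in X$, i.e.\ a polynomial $p$, such that $\|\phi - p\|_{K} < \epsilon$ and $L_i(\phi) = L_i(p)$ for $i = 1,\ldots,n$, which is exactly the conclusion of Lemma 6 (reading $\psi$ as a typographical slip for $p$).

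There is essentially no obstacle; the only thing to verify is the density of polynomials in $A(K)$, which is built into the definition of a set of polynomial approximation, and the continuity of $L_i$ on the larger space $Y = A(K)$ rather than only on the subspace $X$, which is given by hypothesis. So this lemma is a clean packaging of Deutsch's result in the specific functional-analytic setting needed later for the main theorem.
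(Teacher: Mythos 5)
Your proposal is correct and is exactly the argument the paper intends: the paper states Lemma 6 without proof as ``a particular case of the Walsh lemma,'' the implicit justification being precisely your application of Deutsch's Lemma 5 with $Y=A(K)$ under the supremum norm and $X$ the polynomials, whose density is the definition of a set of polynomial approximation. Your reading of $\psi$ as a typo for $p$ is also right.
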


For $\phi\in C([j,j-1]), j \in \mathbb{Z},$  set
$$\begin{array}l
{T_1}^{j}(\phi) = \int_{j - 1}^{j}\int_{0}^{x_1}\int_{0}^{x_2} \cdots \int_{0}^{x_{m - 1}}\phi (t) dt dx_{m - 1} \cdots dx_1,\\
\\
{T_2}^{j}(\phi) = \int_{j - 1}^{j}\int_{0}^{x_1}\int_{0}^{x_2} \cdots \int_{0}^{x_{m - 2}}\phi (t) dt  dx_{m - 2} \cdots dx_1,\\
.\\
.\\
.\\
{T_m}^{j}(\phi) = \int_{j - 1}^{j}\phi (t) dt.\\
\end{array}$$

\begin{lemma}
For all $f \in C(\mathbb{R})$ and for all $\epsilon \in C^+(\mathbb{R}),$ there exists an entire function $g$ such that $g(0)=f(0); \, {T_i}^{j}(g) = {T_i}^{j}(f)$ for $i = 1, 2, \cdots , m$ and  $j \in \mathbb{Z};$ and $|f(t) - g(t)| < \epsilon(t)$.
\end{lemma}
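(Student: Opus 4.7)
I would follow the classical strategy of Carleman's theorem (Theorem~2), but at each inductive stage invoke Lemma~7 in place of plain polynomial approximation, so as to preserve exactly a growing finite list of linear functionals. Let $\mathcal{L}_n$ denote the finite collection $\{\mathrm{ev}_0\} \cup \{T_i^j : 1 \leq i \leq m,\ |j| \leq n\}$ of bounded linear functionals. I will construct inductively polynomials $g_n$ such that $L(g_n) = L(f)$ for every $L \in \mathcal{L}_n$, such that $|g_n - f|$ is bounded on $[-n,n]$ by a tolerance shrinking with $n$, and such that $\|g_{n+1} - g_n\|_{L_n}$ decays geometrically, where $L_n$ is a nested exhaustion of $\mathbb{C}$ by compact, polynomially convex ``Carleman-type'' sets (for instance, $L_n = \{|z| \leq n-1\} \cup [-n, -(n-1)] \cup [n-1, n]$, whose interior is the open disk $\{|z| < n-1\}$ and whose complement in $\mathbb{C}$ is connected). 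Then $g := \lim_n g_n$ converges uniformly on compact subsets of $\mathbb{C}$, so it is entire.

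For the inductive step, the correction $h_{n+1} := g_{n+1} - g_n$ must be an entire function that is small on $L_n$, close to $f - g_n$ on the two new real arms $[-n-1,-n] \cup [n,n+1]$, and satisfies $L(h_{n+1}) = 0$ for $L \in \mathcal{L}_n$ together with $L(h_{n+1}) = L(f - g_n)$ for the new functionals $L \in \mathcal{L}_{n+1} \setminus \mathcal{L}_n$. I would obtain $h_{n+1}$ by applying Lemma~7 on $L_{n+1}$ with a target $\psi_{n+1} \in A(L_{n+1})$ realising these three conditions verbatim, because Lemma~7 transfers the functional values of $\psi_{n+1}$ to those of $h_{n+1}$ exactly. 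Concretely, $\psi_{n+1}$ is taken to vanish on the interior disk of $L_n$ (hence, by the identity principle, on the whole closed disk inside $L_{n+1}$) and to interpolate continuously to $f - g_n$ along the two new real arms, with a final finite-dimensional correction to arrange the functional values.

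The main obstacle is this last correction, because the $T_i^j$ are non-local: a direct computation shows $T_i^j(\psi) = \Phi_{m-i+1}(j) - \Phi_{m-i+1}(j-1)$, where $\Phi_k$ denotes the $k$-fold iterated antiderivative of $\psi$ from $0$, so $T_i^j(\psi)$ depends on $\psi$ throughout $[\min(0,j-1), \max(0,j)]$. Consequently a naive cutoff of $f - g_n$ will not automatically yield vanishing $T_i^j$ for $|j| \leq n$ nor the prescribed nonzero values at $|j| = n+1$. However, since $\mathcal{L}_{n+1}$ is finite and the underlying functionals are linearly independent on $C(L_{n+1} \cap \mathbb{R})$, the adjustment is a finite-dimensional linear-algebra problem: I add to the naive $\psi_{n+1}$ a small linear combination of fixed bump functions, supported in short subintervals of $(-n+1/4, n-1/4)$ where $\psi_{n+1}$ is otherwise meant to vanish, with coefficients determined by solving a finite linear system so as to cancel the unwanted functional values. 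Since those values are controlled by the inductive error $|g_n - f|$ on $[-n,n]$, the required bump amplitudes are small enough to leave both the real-line approximation on the arms and the near-vanishing on $L_n$ intact.

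Setting $g_{n+1} := g_n + h_{n+1}$ then propagates the inductive properties, and passage to the limit yields an entire $g$ with $g(0) = f(0)$, $T_i^j(g) = T_i^j(f)$ for all $i,j$, and $|g - f| < \epsilon$ on $\mathbb{R}$, as required.
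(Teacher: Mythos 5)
Your overall architecture --- a Carleman-type induction over the compacta $\overline{D_{n}}\cup[-(n+1),-n]\cup[n,n+1]$, using Mergelyan together with the Walsh--Deutsch simultaneous approximation-and-interpolation lemma (that is Lemma 6 of the paper; what you cite as ``Lemma~7'' is the statement you are proving) --- is exactly the paper's. The genuine problem lies in your treatment of what you call the main obstacle. The proposed fix, adding compactly supported bump functions on short subintervals of $(-n+1/4,\,n-1/4)$, is fatal: those subintervals lie inside the open disk $D_n$, which is the interior of $L_{n+1}$, so the corrected target $\psi_{n+1}$ no longer belongs to $A(L_{n+1})$. Indeed, a function holomorphic on $D_n$ that vanishes on an open real subinterval vanishes identically by the identity theorem, and a nontrivial bump is not real-analytic, so the corrected target cannot be the restriction of anything holomorphic on $D_n$. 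Mergelyan's theorem, and with it Lemma 6, then fails to apply, you cannot produce the polynomial $g_{n+1}$, and the inductive step collapses.

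Moreover, the obstacle you are correcting for is not actually there. By your own identity $T_i^j(\phi)=\Phi_{m-i+1}(j)-\Phi_{m-i+1}(j-1)$, the inductive equalities $T_i^j(g_n)=T_i^j(f)$ for $i=1,\dots,m$ and consecutive $j$, combined with the normalizations $\Phi_k(0)=0$ and the interpolation at $0$, telescope to show that every iterated antiderivative of $f-g_n$ vanishes at each integer of $[-n,n]$, in particular at $\pm n$. Consequently the genuinely new functionals --- which are $T_i^{n+1}$ and $T_i^{-n}$, not $T_i^{\pm(n+1)}$: note $T_i^{-(n+1)}$ integrates over $[-n-2,-n-1]$, which lies outside $L_{n+1}$, so your symmetric indexing $|j|\le n$ is off by one on the negative side --- localize to the new arms, where the naive target equals $f-g_n$ exactly; and the old functionals applied to the naive target vanish because that target is identically zero on $[-n,n]\supseteq[j-1,j]$. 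So the naive cutoff already has precisely the required functional values, no finite-dimensional correction is needed, and deleting that step (together with fixing the index range) turns your argument into the paper's proof.
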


\begin{proof} 
First of all, we can see that Lemma 7 is true for a finite number of $j,$ by applying Lemma 6 to a closed interval $E$ containing the intervals $[j-1,j]$ in question.   

Let $f \in C(\mathbb{R})$ and $\epsilon\in C^+(\mathbb{R}).$ 
We may assume that $\epsilon(t)=\epsilon(|t|)$ and $\epsilon(|t|)$ is decreasing as $|t|$ grows. 
Let $\{\epsilon_k\}$ be a sequence of positive numbers such that $\epsilon_k < \epsilon(k)$ and $\sum_{k = \ell}^{\infty}\epsilon_k < \epsilon(t)/2$ for $\ell \geq |t|$, $t \in \mathbb{R}$. We may choose, $\epsilon_k = \epsilon(k)/2^{k+2}$.
Indeed, 
\[\sum_{k = \ell}^{\infty}\epsilon_k = \sum_{k = \ell}^{\infty}\epsilon(k)/2^{k+2} \leq \epsilon(\ell)\sum_{k = \ell}^{\infty}1/2^{k+2} = \epsilon(\ell)/2^{\ell+1} \le \epsilon(t)/2.\]
Now, for each $k\in\N,$ set  
$E_k = \overline{D_{k - 1}} \, \bigcup \, [-k, -(k - 1)] \, \bigcup \, [k - 1, k],$ where $D_r$ is the disc of center $0$ and radius $r.$  

By Lemma 6 and the Weierstrass approximation theorem, there exists a polynomial $g_1$ such that $|f - g_1| < \epsilon_1$ on $[-1 , 1], f(j) = g_1(j),$ for  $j = - 1, 0, 1$  and 
$$
	T^j_i(f)=T^j_i(g_1), \, i=1,\cdots,m; \quad  j=0, 1. 
$$

Set  
\[h_2 = 
\left \{ 
\begin{array}{ll}
g_1 \mbox{ on } \overline{D_1}\\
f \mbox{ on } [-2, 2] \, \backslash \,[-1, 1]. \\
\end{array} \right. \]
Since $h_2\in A(E_2),$ it follows from Lemma 6 and the Mergelyan theorem that there is  a polynomial $g_2$ such that $|h_2 - g_2| < \epsilon_2$ on $E_2$, $h_2(j) = g_2(j)$ for $j = -2, -1, \cdots, 2$ and such that 
$$
T^j_i(h_2)=T^j_i(g_2), \, i=1,\cdots,m; \quad  j= -1, 0, \cdots, 2. 
$$ 
Thus, we have 
$$
T^j_i(f)=T^j_i(g_2), \, i=1,\cdots,m;  \quad  j= -1, 0, \cdots, 2; 
$$
$$
 f(j) = g_2(j) \mbox{ for } j = -2, -1, \cdots, 2;
$$
and
\[|f - g_2| <  
\left \{ 
\begin{array}{ll}
\epsilon_2 \mbox{ on } [-2, 2] \,\backslash \,[-1, 1]\\
\epsilon_1 + \epsilon_2 \mbox{ on } [-1, 1]. \\
\end{array} \right. \]

Indeed, \[|f - g_2| \leq |f - h_2| + |h_2 - g_2| = \]

\[\left \{ 
\begin{array}{ll}
0 + |h_2 - g_2| < \epsilon_2 \mbox{ on } [-2, 2] \, \backslash \, [-1, 1]\\
|f - g_1| + |h_2 - g_2| < \epsilon_1 + \epsilon_2 \mbox{ on } [-1, 1] .\\
\end{array} \right. \]
We also have that $|g_2 - g_1| < \epsilon_2$ on $\overline D_1.$ Indeed, 
\[|g_2 - g_1| \leq |g_2 - h_2| + |h_2 - g_1| < \epsilon_2 + 0 \mbox{ on } \overline D_1.\]

Setting $g_o=g_1,$ we shall show by induction that for $k= 1, 2, \cdots, $ there exist polynomials  $g_k,$ such that 
\begin{equation}
	T^j_i(f)=T^j_i(g_k), \, i=1,\cdots,m;  \quad  j= -(k - 1), \cdots, k .
\end{equation}
\begin{equation}
     f(j) = g_k(j) \mbox{ for } j = -k,\cdots, k
\end{equation}
\begin{equation}
|f - g_k| <
\left \{ 
\begin{array}{ll}
\epsilon_k \mbox{ on } [-k, k] \,\backslash \,[-(k - 1), k - 1]\\
\epsilon_{k - 1} + \epsilon_k \mbox{ on }  [-(k - 1), k - 1] \,\backslash \,[-(k - 2), k - 2]\\
.\\
.\\
.\\
\epsilon_1 + \epsilon_2 + \cdots + \epsilon_k \mbox{ on } [-1, 1]\\
\end{array} \right.
\end{equation}
and
\begin{equation}
|g_k - g_{k - 1}| < \epsilon_k \mbox{ on } \overline D_{k - 1}.
\end{equation}

As shown before, we already verified the cases $k = 1$ and $2$. We suppose the validity of the cases $k = 1, \cdots, n$. Set
\[h_{n + 1} = 
\left \{ 
\begin{array}{ll}
g_{n} \mbox{ on } \overline{D_{n}}\\
f \mbox{ on } [-(n + 1) , n + 1 ] \, \backslash \,[-n, n] .\\
\end{array} \right. \]
There exists a polynomial $g_{n + 1}$ such that $|h_{n + 1} - g_{n + 1}| < \epsilon_{n + 1}$ on $E_{n + 1}$, $h_{n + 1}(j) = g_{n + 1}(j)$ for $j = -(n + 1),\cdots, n + 1$ and such that 
$$
	T^j_i(h_{n + 1})=T^j_i(g_{n + 1}), \, i=1,\cdots,m; \quad  j= -n,\cdots, n + 1. 
$$ 
Thus, we have 
$$
T^j_i(f)=T^j_i(g_{n + 1}), \, i=1,\cdots,m;  \quad   j= -n,\cdots, n + 1,
$$
$$
 f(j) = g_{n + 1}(j) \mbox{ for } j = -(n + 1),\cdots, n + 1.
$$
and 
\[|f - g_{n + 1}| <
\left \{ 
\begin{array}{ll}
\epsilon_{n + 1} \mbox{ on } [-(n + 1), n + 1] \,\backslash \,[-n, n ]\\
\epsilon_{n} + \epsilon_{n + 1} \mbox{ on }  [-n, n] \,\backslash \,[-(n - 1), n - 1]\\
.\\
.\\
.\\
\epsilon_1 + \epsilon_2 + \cdots + \epsilon_{n + 1} \mbox{ on } [-1, 1]\\
\end{array} \right.\]
since
\[|f - g_{n + 1}|  \leq |f - h_{n + 1}| + |h_{n + 1} - g_{n + 1}| = \]
\[\left \{ 
\begin{array}{ll}
0 + |h_{n + 1} - g_{n + 1}| < \epsilon_{n + 1} \mbox{ on } [-(n +1), n + 1] \, \backslash \, [-n, n]\\
|f - g_n| + |h_{n + 1} - g_{n + 1}| < \epsilon_n + \epsilon_{n + 1} \mbox{ on } [-n, n] \, \backslash \, [-(n - 1), n -1]\\
.\\
.\\
.\\
|f - g_n| + |h_{n + 1} - g_{n + 1}| < \epsilon_1 + \epsilon_2 + \cdots + \epsilon_{n + 1} \mbox{ on } [-1, 1]. \\
\end{array} \right. \]
We also have that $|g_{n+1} - g_n| < \epsilon_{n + 1}$ on $\overline D_n,$ since
\[|g_{n + 1} - g_n| \leq |g_{n + 1} - h_{n + 1}| + |h_{n + 1} - g_n| < \epsilon_{n + 1} \mbox{ on } \overline D_n.\] 

Let us show that the sequence $\{g_k\}$ converges uniformly on compacta. 
It is sufficient to show that $\lbrace g_k \rbrace$, is uniformly Cauchy on compact subsets.
For each $k$, we have that $|g_k - g_{k - 1}| < \epsilon_k$ on $\overline{D_{k - 1}}$. Let $K \subset \mathbb{C}$ be an arbitrary compact set.
For $\delta > 0$, we choose $N_{\delta}$ so large that $K \subset D_{N_{\delta}}$ and $k > \ell > N_{\delta} \Rightarrow \sum^{k}_{j = \ell} \epsilon_j < \delta$.
Then, for such $k$ and $\ell$,\[|g_k - g_\ell| \leq \sum_{j = \ell}^{k - 1}|g_{j + 1} - g_j| \leq \sum_{j = \ell}^{k - 1}\epsilon_{j + 1} < \delta \mbox{, $\quad$ on } K.\] Thus, the sequence $g_k$ converges uniformly on compacta. The limit $g$ is therefore an entire function.

Let us show that $|f - g| < \epsilon$. Fix $t \in \mathbb{R}$ and choose $\ell = [|t|] + 1$. Then, for all $k \geq \ell,$
$$ 	
	|f(t) - g_k(t)| \le |f(t)-g_m(t)|+\sum_{j=\ell+1}^k|g_j(t)-g_{j-1}(t)|<
 \sum_{j = \ell}^{k}\epsilon_j < \epsilon(t)/2.
$$
Now, we choose $k \geq \ell$ so large that $|g_k(t) - g(t)| < \epsilon(t)/2$. Then,
\[|f(t) - g(t)| \leq |f(t) - g_k(t)| + |g(t) - g_k(t)| < \epsilon(t).\]

Finally, we must show that $T^j_i(g)=T^j_i(f), \, i=1,\cdots,m \,;  \quad  j= \Z$. Fix $j.$ For all $k>|j|,$ we have $j\in\{(k-1),\cdots,k\}.$ Thus, by (1), 
$$
	T^j_i(f)=T^j_i(g_k), \, i=1,\cdots,m
$$
and consequently, 
$$
	T^j_i(g)=\lim_{k\rightarrow\infty}T^j_i(g_k)=\lim_{k\rightarrow\infty}T^j_i(f)=T^j_i(f).
$$
\end{proof}

%%%%%%%%%%%%%%%%%%%%%%%%%%%%%%%%%%%%%%%%%%%%%%%%%%%%%

\section{Proof of Theorem 3}

\begin{proof}:  For simplicity, we shall prove Corollary 4, which is a special  case of Theorem 3.  The proof of the general theorem is an obvious modification. 

We may assume that $f^{(i)}(0)=0, i=0,\cdots,m$ and we may also assume that $\epsilon (t)=\epsilon(|t|)$ and that $\epsilon(|t|)$ is decreasing, as $|t|$ increases. Put $\epsilon_o=\epsilon,$ and for $i=1,\cdots,m;$ put $\epsilon_i(t)=\epsilon_{i-1}(|t|+1).$ Then, for $i=0,\cdots,m,$ we have $\epsilon_i(t)=\epsilon(|t|),$ the functions $\epsilon(t)$ are decreasing as $|t|$ increases and $\epsilon_i>\epsilon_{i+1}, i=0,\cdots,m-1.$ 
  
By Lemma 7 there exists an entire function  $g_m$ such that $g_m(0)= f^{(m)}(0)$;

 \[\left| f^{(m)}(t) - g_m(t) \right| < \epsilon_m(t);\] 
\[\int_{n - 1}^{n} f^{(m)}(t) dt = \int_{n - 1}^{n} g_m(t) dt;\]
and
$$
\int_{n - 1}^{n}\int_{0}^{x_1} \cdots \int_{0}^{x_{i}} f^{(m)}(t) dt dx_{i} \cdots  dx_1 =
$$
$$
\int_{n - 1}^{n}\int_{0}^{x_1} \cdots \int_{0}^{x_{i}} g_m(t)  dt dx_{i} \cdots  dx_1;
$$
$$
	\mbox{for} \quad \quad i=1,\cdots,m-1; \quad \mbox{and} \quad n\in\Z.
$$
We define the following entire functions.

\[g_k(z) = \int_{0}^{z} g_{k+1}(\zeta) d\zeta; \quad k=m-1, m-2, \cdots,0.\]
Thus, we have:
$$
\begin{array}{llll}
	g_k^\prime(z)	&	=	&	g_{k+1}(z)	,				&	k=0,\cdots,m-1	\\
				&	=	&	\int_0^zg_{k+2}(\zeta)d\zeta,	&	k=0,\cdots,m-2.		
\end{array}
$$
Hence, setting $g=g_0,$ we have:
$$ 
	g'(z) = g_1(z), \quad g''(z) = g_2(z), \quad  \cdots \quad  g^{(m)}(z) = g_m(z).
$$
Therefore
$$
	 \left| f^{(m)}(x) - g^{(m)}(x) \right| =  \left| f^{(m)}(x) - g_m(x) \right| < \epsilon_m(x) \le \epsilon(x).
$$
We shall now show that
\[\left| f^{(m - 1)}(x) - g^{(m - 1)}(x) \right| < \epsilon_{m-1}(x) \le\epsilon(x).\]
Denoting the integer part of $x$ by $[x],$ we have, if $x\ge 0:$
\[ 
	\left| f^{(m - 1)}(x) - g^{(m - 1)}(x) \right| = \left| \int_{0}^{x} \lbrack f^{(m)}(t) - g^{(m)}(t) 	\rbrack dt \right| =
\]
\[
	\left| \sum_{n = 1}^{[x]} \int_{n - 1}^{n} \lbrack f^{(m)}(t) - g^{(m)}(t) \rbrack dt + 				\int_{[x]}^{x} \lbrack f^{(m)}(t) - g^{(m)}(t) \rbrack dt \right| =
\] 
\[
	\left| \int_{[x]}^{x} \lbrack f^{(m)}(t) - g^{(m)}(t) \rbrack dt \right| \leq 
	 \epsilon_m([x]) = 
\]
\[
	\epsilon_{m - 1}([x] +1) < \epsilon_{m - 1}(x) \le \epsilon(x).
\]
Similarly, if $x\le 0,$
\[ 
	\left| f^{(m - 1)}(x) - g^{(m - 1)}(x) \right| = \left| \int_{0}^{x} \lbrack f^{(m)}(t) - g^{(m)}(t) 	\rbrack dt \right| =
\]
\[
	\left| \sum_{n = -1}^{[x]} \int_{n+1}^{n} \lbrack f^{(m)}(t) - g^{(m)}(t) \rbrack dt + 				\int_{[x]}^{x} \lbrack f^{(m)}(t) - g^{(m)}(t) \rbrack dt \right| =
\] 
\[
	\left| \int_{[x]}^{x} \lbrack f^{(m)}(t) - g^{(m)}(t) \rbrack dt \right| \leq 
	 \epsilon_m([x]) = 
\]
\[
	\epsilon_{m - 1}([x] +1) < \epsilon_{m - 1}(x) \le \epsilon(x).
\]

Next we show that $|f^{(m - 2)}(x) - g^{(m - 2)}(x)| < \epsilon_{m - 2}(x) \le \epsilon(x).$ As in the previous case, 
\[\left| 
	f^{(m - 2)}(x) - g^{(m - 2)}(x) \right| = 
	\left| \int_{0}^{x}\lbrack f^{(m - 1)}(x_1) - g^{(m - 1)}(x_1)\rbrack dx_1  \right| =
\]
\[
	\left| \int_{[x]}^{x}\lbrack f^{(m - 1)}(x_1) - g^{(m - 1)}(x_1) \rbrack dx_1 \right|  \leq
\]
\[
	\epsilon_{m - 1}([x]) = \epsilon_{m - 2}([x] +1) < \epsilon_{m - 2}(x) \le \epsilon(x).
\]
Repeating the same argument $m - 2$ times, we obtain that $|f^{(i)}(x) - g^{(i)}(x)| < \epsilon(x), x\in \mathbb{R}$ and $i = 0, 1, \cdots , m$.

\end{proof}

%%%%%%%%%%%%%%%%%%%%%%%%%%%%%%%%%%%%%%%%%%%%%%%%

\end{document}